\newtheorem{theorem}{Theorem}[section]
\newtheorem{lemma}[theorem]{Lemma}
\newtheorem{definition}[theorem]{Definition}
\newtheorem{proposition}[theorem]{Proposition}
\newtheorem{example}[theorem]{Example}
\newtheorem{corollary}[theorem]{Corollary}
\date{}
\begin{document}
\renewcommand{\baselinestretch}{1.2}
\renewcommand{\arraystretch}{1.0}
\title{\bf Crossed products and Galois extensions for monoidal Hom-Hopf algebras}

\author{{\bf Yan Ning$$, Daowei Lu\footnote {Corresponding author: Email: ludaowei620@126.com }}\\
{\small Department of Mathematics, Jining University}\\
{\small Qufu, Shandong 273155, P. R. China}
}
\maketitle

\begin{center}
\begin{minipage}{12.cm}

\noindent{\bf Abstract.} Let $(H,\alpha)$ be a monoidal Hom-Hopf algebra, and $(A,\beta)$ a Hom-algebra. In this paper we will introduce the crossed product $(A\#_{\sigma}H,\beta\otimes\alpha)$, which is a Hom-algebra. Then we will introduce the notions of cleft extensions and Galois extensions respectively, and prove that a crossed product is equivalent to a cleft extension and a cleft extension is equivalent to a Galois extension with normal bases property.
 \\

\noindent{\bf Keywords:} Monoidal Hom-Hopf algebra; Crossed product; Cleft extension; Galois extension.
\\

 \noindent{\bf  Mathematics Subject Classification:} 16T05.
 \end{minipage}
 \end{center}

\section{Introduction}
\label{xxsec1}
The crossed products of Hopf algebras originated in the group theory and were independently introduced in \cite{BCM} and \cite{DT}. Blattner and Montgomery showed in \cite{BM} that a crossed product with invertible 2-cocycle is cleft. In particular crossed products provide examples of Hopf-Galois extensions. Conversely a Hopf-Galois extension with normal bases property is a crossed product as proved in \cite{BM}.

The theory of algebraic deformation has become an important branch of algebras and been well developed recently. The theory has been applied in modules of quantum phenomena, as well as in analysis
of complex systems. Discretization of vector fields via twisted derivations leads to quasi-Hom-Lie structures in which the Jacobi identity is twisted by linear maps (see \cite{HLS,Lar05,Lar07}). The first examples and constructions of quasi-Hom-Lie algebras
and Hom-Lie algebras have been concerned with the $q$-deformations of Witt and Virasoro algebras
obtained when the derivations are replaced by $\sigma$-derivations. In Hom-Lie algebras the Jacobi identity is replaced by the so-called Hom-Jacobi identity, namely
$$[\alpha(x),[y,z]]+[\alpha(y),[z,x]]+[\alpha(z),[x,y]]=0,$$
where $\alpha$ is an endomorphism of the Lie algebra.

Hom-associative algebras was introduced in \cite{MS1} for the first time as an analogue and generalization of associative algebras for Hom-Lie algebras. Here the associativity is replaced by the Hom-associativity and the unit no longer exists, replaced by a weak unit. Dually in \cite{MS2} the notion of Hom-coassociative coalgebra was introduced. Then the concept of Hom-bialgebra and Hom-Hopf algebra were naturally developed as an important generalization of the ordinary Hopf algebras. In \cite{CG} Caenepeel and Goyvaerts illustrated the Hom-structure in the monoidal category approach, and introduced the notion of monoidal Hom-Hopf algebras, which were slightly different from the Hom-Hopf algebras.

Motivated by these ideas, in this paper, firstly we will construct the crossed product of monoidal Hom-Hopf algebras, generalizing the crossed product introduced in \cite{BCM}. Then we will introduce the notions of cleft extensions and Galois extensions of monoidal Hom-Hopf algebras and prove the equivalence among crossed products, cleft extensions and Galois extensions.

This paper is organized as follows: In section 2, we will recall the definitions and results of monoidal Hom-Hopf algebras, such as Hom-algeba,  Hom-coalgebra, Hom-module, and Hom-comodule. In section 3, crossed product in the Hom-setting is constructed, and the relations making the crossed product a Hom-algebra are obtained. In section 4, we will introduce the notion of Hom-analogue cleft extensions and prove that a crossed product is actually a cleft extension and vice versa. In section 5, we will first introduce the definition of Hom-Galois extensions and show that a Galois extension with normal bases property is equivalent to cleft extension, generalizing the result in the classic Hopf algebras.

Throughout this article, all the vector spaces, tensor product and homomorphisms are over a fixed field $k$ unless otherwise stated. We use the Sweedler's notation for the terminologies on coalgebras. For a coalgebra $C$, we write comultiplication $\Delta(c)=\sum c_{1}\otimes c_{2}$ for any $c\in C$.

\section{Preliminary}

In this section, we will recall the basic definitions of monoidal Hom-Hopf algebra from \cite{CG}.

Let $\mathcal{M}_{k}=(\mathcal{M}_{k},\otimes,k,a,l,r)$ be the category of $k$-modules. Now from this category, we could construct a new monoidal category $\mathcal{H}(\mathcal{M}_{k})$. The objects of $\mathcal{H}(\mathcal{M}_{k})$ are pairs $(M,\mu)$, where $M\in\mathcal{M}_{k}$ and $\mu\in Aut_{k}(M)$. Any morphism $f:(M,\mu)\rightarrow (N,\nu)$ in $\mathcal{H}(\mathcal{M}_{k})$ is a $k$-linear map from $M$ to $N$ such that $\nu\circ f=f\circ \mu$. For any objects $(M,\mu)$ and $(N,\nu)$ in $\mathcal{H}(\mathcal{M}_{k})$, the monoidal structure is given by
$$(M,\mu)\otimes(N,\nu)=(M\otimes N,\mu\otimes\nu),$$
and the unit is $(k,id_{k})$.

Generally speaking, all Hom-structure are objects in the monoidal category $\tilde{\mathcal{H}}(\mathcal{M}_{k})=(\mathcal{H}(\mathcal{M}_{k}),\otimes,(k,id_{k}),\tilde{a},\tilde{l},\tilde{r})$ as introduced in \cite{CG}, where the associativity constraint $\tilde{a}$ is given by the formula
$$\tilde{a}_{M,N,L}=a_{M,N,L}\circ((\mu\otimes id)\otimes\lambda^{-1})=(\mu\otimes(id\otimes\lambda^{-1}))\circ a_{M,N,L}$$
for any objects $(M,\mu),\ (N,\nu),\ (L,\lambda)$ in $\mathcal{H}(\mathcal{M}_{k}).$ And the unit constraints $\tilde{l}$ and $\tilde{r}$ are defined by
$$\tilde{l}_{M}=\mu\circ l_{M}=l_{M}\circ(id\otimes\mu),\ \tilde{r}_{M}=\mu\circ r_{M}=r_{M}\circ(\mu\otimes id).$$
The category $\tilde{\mathcal{H}}(\mathcal{M}_{k})$ is called the Hom-category associated to the monoidal category $\mathcal{M}_{k}$. In what follows, we will recall the definitions in \cite{CG} and \cite{LS} on the monoidal Hom-associative algebras, monoidal Hom-coassociative coalgebras, monoidal Hom-modules and monoidal Hom-comodules.

\begin{definition}
 A unital Hom-associative algebra is an object $(A,\alpha)$ in the category $\tilde{\mathcal{H}}(\mathcal{M}_{k})$ together with an element $1_{A}\in A$ and a linear map $m:A\otimes A\rightarrow A,\ a\otimes b\mapsto ab$ such that
\begin{align*}
&\alpha(a)(bc)=(ab)\alpha(c),\ a1_{A}=\alpha(a)=1_{A}a,\\
&\alpha(ab)=\alpha(a)\alpha(b),\ \alpha(1_{A})=1_{A},
\end{align*}
for all $a,b,c\in A.$
\end{definition}

In the setting of Hopf algebras, $m$ is called the Hom-multiplication, $\alpha$ is the twisting automorphism, and $1_{A}$ is the unit.
Let $(A,\alpha)$ and $(A',\alpha')$ be two Hom-algebras. A Hom-algebra map $f:(A,\alpha)\rightarrow(A',\alpha')$ is a linear map such that $f\circ\alpha=\alpha'\circ f$, $f(ab)=f(a)f(b)$ and $f(1_{A})=1_{A}.$

\begin{definition}
 A counital Hom-coassociative coalgebra is an object $(C,\gamma)$ in the category $\tilde{\mathcal{H}}(\mathcal{M}_{k})$ together with linear maps $\Delta:C\rightarrow C\otimes C,\ c\mapsto c_{1}\otimes c_{2}$ and $\varepsilon:C\rightarrow k$ such that
\begin{align*}
&\sum\gamma^{-1}(c_{1})\otimes\Delta(c_{2})=\sum\Delta(c_{1})\otimes\gamma^{-1}(c_{2}),\\
&\sum c_{1}\varepsilon(c_{2})=\sum\varepsilon(c_{1})c_{2}=\lambda^{-1}(c),\\
&\Delta(\gamma(c))=\sum\gamma(c_{1})\otimes\gamma(c_{2}),\ \varepsilon\gamma(c)=\varepsilon(c),
\end{align*}
for all $c\in C.$
\end{definition}

Let $(C,\gamma)$ and $(C',\gamma')$ be two Hom-coalgebras. A Hom-coalgebra map $f:(C,\gamma)\rightarrow(C',\gamma')$ is a linear map such that $f\circ\gamma=\gamma'\circ f,$ $\Delta\circ f=(f\otimes f)\circ\Delta$ and $\varepsilon\circ f=\varepsilon.$

\begin{definition}
A monoidal Hom-bialgebra $H=(H,\alpha,m,1_{H},\Delta,\varepsilon)$ is a bialgebra in the category $\tilde{\mathcal{H}}(\mathcal{M}_{k})$ if $(H,\alpha,m,1_{H})$ is a Hom-algebra and $(H,\alpha,\Delta,\varepsilon)$ is a Hom-coalgebra such that $\Delta$ and $\varepsilon$ are Hom-algebra maps, that is, for any $g,h\in H,$
\begin{align*}
&\Delta(gh)=\Delta(g)\Delta(h),\ \Delta(1_{H})=1_{H}\otimes 1_{H},\\
&\varepsilon(gh)=\varepsilon(g)\varepsilon(h),\ \varepsilon(1_{H})=1.
\end{align*}

A monoidal Hom-bialgebra $(H,\alpha)$ is called a monoidal Hom-Hopf algebra if there exists a linear map $S:H\rightarrow H$(the antipode) such that
$$S\circ\alpha=\alpha\circ S,\ \sum S(h_{1})h_{2}=\varepsilon(h)1_{H}=\sum h_{1}S(h_{2}).$$
\end{definition}

Just as in the case of Hopf algebras, the antipode of monoidal Hom-Hopf algebras is a morphism of Hom-anti-algebras and Hom-anti-coalgebras.

\begin{definition} Let $(A,\alpha)$ be a Hom-algebra. A left $(A,\alpha)$-Hom-module is an object $(M,\mu)$ in $\tilde{\mathcal{H}}(\mathcal{M}_{k})$ together with a linear map $\varphi:A\otimes M\rightarrow M,\ a\otimes m\mapsto am$ such that
$$\alpha(a)(bm)=(ab)\mu(m),\ 1_{A}m=\mu(m),\ \mu(am)=\alpha(a)\mu(m),$$
for all $a,b\in A$ and $m\in M$.
\end{definition}

Similarly we can define the right $(A,\alpha)$-Hom-modules. Let $(M,\mu)$ and $(N,\nu)$ be two left $(A,\alpha)$-Hom-modules, then a linear map $f:M\rightarrow N$ is a called left $A$-module map if $f(am)=af(m)$ for any $a\in A$, $m\in M$ and $f\circ\mu=\nu\circ f$.

\begin{definition}
let $(C,\gamma)$ be a Hom-coalgebra. A right $(C,\gamma)$-Hom-comodule is an object $(M,\mu)$ in $\tilde{\mathcal{H}}(\mathcal{M}_{k})$ together with a linear map $\rho_{M}:M\rightarrow M\otimes C,\ m\mapsto m_{(0)}\otimes m_{(1)}$ such that
\begin{align*}
&\sum\mu^{-1}(m_{(0)})\otimes\Delta(m_{(1)})=\sum\rho_{M}(m_{(0)})\otimes\gamma^{-1}(m_{(1)}),\\ &\sum\varepsilon(m_{(1)})m_{(0)}=\mu^{-1}(m),\\
&\rho_{M}(\mu(m))=\sum\mu(m_{(0)})\otimes\gamma(m_{(1)}),
\end{align*}
for all $m\in M.$
\end{definition}

Let $(M,\mu)$ and $(N,\nu)$ be two right $(C,\gamma)$-Hom-comodules, then a linear map $g:M\rightarrow N$ is a called right $C$-comodule map if $g\circ \mu=\nu\circ g$ and $\rho_{N}(g(m))=(g\otimes id)\rho_{M}(m)$ for any $m\in M.$

\begin{definition} Let $(H,\alpha)$ be a monoidal Hom-Hopf algebra. A Hom-algebra $(B,\beta)$ is called a right $(H,\alpha)$-Hom-comodule algebra if $((B,\beta),\rho)$ is a right $(H,\alpha)$-Hom-comodule and $\rho$ is  a Hom-algebra map.
\end{definition}

\section{Crossed products}
\label{xxsec3}
In this section, we will construct the Hom-crossed product, and make it a Hom-algebra.
\begin{definition}
Let $(H,\alpha)$ be a monoidal Hom-Hopf algebra and $(A,\beta)$  a Hom-algebra. We say $H$ weakly acts on $A$ if there is a $k$-linear map $H\otimes A\rightarrow A$ given by $h\otimes a\mapsto h\cdot a$ such that
$$\beta(h\cdot a)=\alpha(h)\cdot \beta(a),\ h\cdot 1=\varepsilon(h)1,\ h\cdot(ab)=\sum (h_{1}\cdot a_{1})(h_{2}\cdot a_{2}),$$
for any $h\in H$ and $a,b\in A.$
\end{definition}

\begin{definition}
Let $(H,\alpha)$ be a monoidal Hom-Hopf algebra and $(A,\beta)$  a Hom-algebra. Assume that $H$ weakly acts on $A$ and that $\sigma:H\otimes H\rightarrow A$ is convolution invertible. The crossed product $A\#_{\sigma}H$ of $A$ with $H$ is the vector space $A\otimes H$ with the multiplication
$$(a\#h)(b\#k)=\sum a[(\alpha^{-1}(h_{1})\cdot\beta^{-2}(b))\sigma(h_{21},\alpha^{-1}(k_{1}))]\#\alpha^{2}(h_{22})\alpha(k_{2}),$$
for any $h,k\in H$ and $a,b\in A.$
\end{definition}

\begin{proposition}
$(A\#_{\sigma}H,\beta\otimes\alpha)$ is an Hom-associative algebra with identity $1\#1$ if and only if the following conditions hold:
\begin{enumerate}
\item[(1)]
$A$ is a twisted $H$-module, that is $1\cdot a=\beta(a)$ for any $a\in A$, and
$$\sum(\alpha(h_{1})\cdot(l_{1}\cdot \beta^{-1}(a)))\sigma(\alpha(h_{2}),\alpha(l_{2}))=\sum\sigma(\alpha(h_{1}),\alpha(l_{1}))(h_{2}l_{2}\cdot a),\eqno(3.1)$$
for any $h,l\in H$ and $a\in A.$
\item[(2)]
For all $h\in H$, $\sigma(1,h)=\sigma(h,1)=\varepsilon(h)1$, and
$$\sum (\alpha(h_{1})\cdot\sigma(l_{1}, k_{1}))\sigma(\alpha(h_{2}),l_{2}k_{2})=\sum\sigma(\alpha(h_{1}),\alpha(l_{1}))\sigma(h_{2}l_{2},k),\eqno(3.2)$$
for any $h,k,l\in H$.
\item[(3)]
$\sigma$ is a morphism in the category $\tilde{\mathcal{H}}(\mathcal{M}_{k})$, that is, $$\sigma\circ(\alpha\otimes\alpha)=\beta\circ\sigma.$$
\end{enumerate}
\end{proposition}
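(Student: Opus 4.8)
The plan is to verify the defining axioms of a monoidal Hom-algebra for $(A\#_{\sigma}H,\beta\otimes\alpha)$ one at a time, and in each verification to read off exactly which of the three conditions is responsible. I would organize the argument around four checks: that $\beta\otimes\alpha$ is multiplicative and unit-preserving, that $1\#1$ is a two-sided unit in the twisted sense, and that the product is Hom-associative. The first two are comparatively short and serve to isolate conditions (2) and (3), so I would dispatch them before attacking associativity.

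First I would treat the twisting map and the unit. That $\beta\otimes\alpha$ preserves the product is essentially forced: expanding $(\beta\otimes\alpha)\bigl((a\#h)(b\#k)\bigr)$ and $\bigl((\beta\otimes\alpha)(a\#h)\bigr)\bigl((\beta\otimes\alpha)(b\#k)\bigr)$ and matching terms reduces to the compatibility $\beta(h\cdot a)=\alpha(h)\cdot\beta(a)$ of the weak action, the identity $\Delta\alpha=(\alpha\otimes\alpha)\Delta$, and precisely the morphism condition $\sigma\circ(\alpha\otimes\alpha)=\beta\circ\sigma$ of (3). For the unit, I would compute $(a\#h)(1\#1)$ and $(1\#1)(b\#k)$ directly; collapsing the Sweedler sums requires $h\cdot 1=\varepsilon(h)1$, the counit axiom, and the normalizations $\sigma(h,1)=\sigma(1,h)=\varepsilon(h)1$, together with $1\cdot a=\beta(a)$. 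These should return $\beta(a)\#\alpha(h)$ and $\beta(b)\#\alpha(k)$, i.e. exactly $\beta\otimes\alpha$ applied to the surviving factor, which is the correct unit behaviour in the Hom setting.

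The heart of the proof is Hom-associativity, $\bigl((\beta\otimes\alpha)(a\#h)\bigr)\bigl((b\#k)(c\#m)\bigr)=\bigl((a\#h)(b\#k)\bigr)\bigl((\beta\otimes\alpha)(c\#m)\bigr)$, and this is where I expect the real work to lie. I would expand each side fully by applying the product formula twice, pushing all comultiplications outward by Hom-coassociativity and commuting the twistings $\alpha,\beta$ through the weak action and through $\sigma$. After this bookkeeping the $H$-components of the two sides agree automatically, since both are built from iterated comultiplications of $h,k,m$, so everything reduces to equality of the $A$-components. Those $A$-components split into two kinds of terms: those in which an element is hit by nested weak actions, and those consisting purely of products of cocycle values. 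The first kind is governed by the twisted-module condition $(3.1)$ and the second by the cocycle condition $(3.2)$; applying each identity once rewrites the left-hand expansion into the right-hand one. The main obstacle will be exactly this computation—keeping the powers of $\alpha$ and $\beta$ correctly aligned across the doubly-iterated Sweedler indices, and recognizing which grouping of the resulting terms is the instance of $(3.1)$ and which is the instance of $(3.2)$.

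The converse direction is then a matter of specialization. Assuming $(A\#_{\sigma}H,\beta\otimes\alpha)$ is a Hom-algebra with unit $1\#1$, I would feed carefully chosen elements into the unit and associativity laws and compare components. Setting the appropriate tensor factors equal to $1$ in the unit conditions yields $1\cdot a=\beta(a)$ and the normalizations of $\sigma$; substituting $a=b=1$ (respectively $a=b=c=1$) into the associativity identity and comparing $A$-components recovers $(3.1)$ (respectively $(3.2)$), while condition (3) is forced by demanding that $\beta\otimes\alpha$ be an algebra endomorphism of the product just constructed.
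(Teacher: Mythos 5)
Your proposal is correct and follows essentially the same route as the paper's proof: the hard direction is the double-expansion of the product and a Sweedler-index bookkeeping argument in which (3.1) and (3.2) are each applied exactly once, while the converse comes from specializing the unit and Hom-associativity axioms at elements with selected tensor factors equal to $1$, with condition (3) read off from the multiplicativity of $\beta\otimes\alpha$ and with $\id\otimes\varepsilon_{H}$ applied to extract the $A$-component identities. One small correction to your specializations: to isolate (3.1) the third factor must also have trivial $H$-component (the paper uses $(1\#h)$, $(1\#l)$, $(\beta(a)\#1)$), since with a general third factor $c\#m$ the resulting identity mixes the twisted-module and cocycle conditions rather than yielding (3.1) alone.
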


\begin{proof}
If $(A\#_{\sigma}H,\beta\otimes\alpha)$ is an Hom-associative algebra with identity $1\#1$,
for any $a\#h\in A\#_{\sigma}H$, by the unity,
$$(1\#1)(1\#h)=\sum \beta^{2}(\sigma(1,\alpha^{-1}(h_{1})))\#\alpha^{2}(h_{2})=1\#\alpha(h).$$

Applying $id\otimes\varepsilon_{H}$ to both sides of the above equation, we have $\sigma(1,h)=\varepsilon(h)1.$
Similarly by $(a\#h)(1\#1)=\beta(a)\#\alpha(h)$, we obtain $\sigma(h,1)=\varepsilon(h)1$.

Since $(1\#1)(a\#1)=\sum \beta((1\cdot\beta^{-2}(a))\sigma(1,1))\#1=\beta(a)\#1$, applying $id\otimes\varepsilon_{H}$ to both sides, we have $1\cdot a=\beta(a).$

By $(\beta\otimes\alpha)((a\#h)(b\#k))=(\beta\otimes\alpha)(a\#h)(\beta\otimes\alpha)(b\#k)$, we can get
$$ \sigma\circ(\alpha\otimes\alpha)=\beta\circ\sigma.$$

For any $h,l,k\in H,$ on one hand,
\begin{align*}
&(1\#\alpha(h))[(1\#l)(1\#k)]\\
&=\sum(1\#\alpha(h))(\beta(\sigma(l_{1},k_{1}))\#\alpha(l_{2}k_{2}))\\
                            &=\sum \beta[h_{1}\cdot\sigma(\alpha^{-1}(l_{1}),\alpha^{-1}(k_{1}))\sigma(\alpha(h_{21}),l_{21}k_{21})]\#\alpha^{3}(h_{22})\alpha^{2}(l_{22}k_{22})\\
                            &=\sum (\alpha(h_{1})\cdot\sigma(l_{1}, k_{1}))\sigma(\alpha^{2}(h_{21}),\alpha(l_{21}k_{21}))\#\alpha^{3}(h_{22})\alpha^{2}(l_{22}k_{22}).
\end{align*}
On the other hand,
\begin{align*}
&((1\#h)(1\#l))(1\#\alpha(k))\\
&=\sum(\beta(\sigma(h_{1},l_{1}))\#\alpha(h_{2}l_{2}))(1\#\alpha(k))\\
                            &=\sum\beta(\sigma(h_{1},l_{1}))[\sigma(\alpha(h_{21}l_{21}),\alpha(k_{1}))]\#\alpha^{2}(h_{22}l_{22})\alpha^{2}(k_{2})\\
                            &=\sum\sigma(\alpha(h_{1}),\alpha(l_{1}))\sigma(\alpha(h_{21}l_{21}),\alpha(k_{1}))\#\alpha^{2}(h_{22}l_{22})\alpha^{2}(k_{2}).
\end{align*}
By the Hom-associativity,
\begin{align*}
&\sum (\alpha(h_{1})\cdot\sigma(l_{1}, k_{1}))\sigma(\alpha^{2}(h_{21}),\alpha(l_{21}k_{21}))\#\alpha^{3}(h_{22})\alpha^{2}(l_{22}k_{22})\\
=&\sum\sigma(\alpha(h_{1}),\alpha(l_{1}))\sigma(\alpha(h_{21}l_{21}),\alpha(k_{1}))\#\alpha^{2}(h_{22}l_{22})\alpha^{2}(k_{2}).
\end{align*}
Applying $id\otimes \varepsilon_{H}$ to the above equation, we obtain the relation (3.1).

For any $h,l\in H$ and $a\in A$,
\begin{align*}
&((1\#h)(1\#l))(\beta(a)\#1)\\
&=\sum(\beta(\sigma(h_{1},l_{1}))\#\alpha(h_{2}l_{2}))(\beta(a)\#1)\\
                           &=\sum\beta(\sigma(h_{1},l_{1}))[(h_{21}l_{21}\cdot\beta^{-1}(a))\sigma(\alpha(h_{221}l_{221}),1)]\#\alpha^{3}(h_{222}l_{222})\\
                           &=\sum\beta(\sigma(h_{1},l_{1}))(\alpha(h_{21}l_{21})\cdot a)\#\alpha^{2}(h_{22}l_{22})\\
                           &=\sum\sigma(\alpha(h_{1}),\alpha(l_{1}))(\alpha(h_{21}l_{21})\cdot a)\#\alpha^{2}(h_{22}l_{22}),
\end{align*}
and
\begin{align*}
&(1\#\alpha(h))[(1\#l)(a\#1)]\\
&=\sum (1\#\alpha(h))((\beta^{2}(\alpha^{-1}(l_{1})\cdot \beta^{-2}(a)))\varepsilon(l_{21})\#\alpha^{3}(l_{22})\\
                            &=\sum (1\#\alpha(h))(\alpha(l_{1})\cdot a\#\alpha^{2}(l_{2}))\\
                            &=\sum(\alpha(h_{1})\cdot(l_{1}\cdot \beta^{-1}(a)))\sigma(\alpha^{2}(h_{21}),\alpha^{2}(l_{21}))\#\alpha^{3}(h_{22}l_{22}).
\end{align*}
By the Hom-associativity again, we have
\begin{align*}
&\sum\sigma(\alpha(h_{1}),\alpha(l_{1}))(\alpha(h_{21}l_{21})\cdot a)\#\alpha^{2}(h_{22}l_{22})\\
=&\sum(\alpha(h_{1})\cdot(l_{1}\cdot \beta^{-1}(a)))\sigma(\alpha^{2}(h_{21}),\alpha^{2}(l_{21}))\#\alpha^{3}(h_{22}l_{22}).
\end{align*}
Applying $id\otimes\varepsilon_{H}$ to the above equation, we have the relation (3.1).

Conversely, if the conditions (1), (2) and (3) hold, It is easy to get that $1\#1$ is the unit.
Then for any $a,b,c\in A$, and $h,l,k\in H,$
\begin{align*}
&(\beta(a)\#\alpha(h))((b\#l)(c\#k))\\
&=\sum(\beta(a)\#\alpha(h))(b[(\alpha^{-1}(l_{1})\cdot\beta^{-2}(c))\sigma(l_{21},\alpha^{-1}(k_{1}))]\#\alpha^{2}(l_{22})\alpha(k_{2}))\\
&=\sum \beta(a)[(h_{1}\cdot \beta^{-2}(b)[(\alpha^{-3}(l_{1})\cdot\beta^{-4}(c))\sigma(\alpha^{-2}(l_{21}),\alpha^{-3}(k_{1}))])\sigma(\alpha(h_{21}),\alpha(l_{221})k_{21})]\\
&\quad\#\alpha^{3}(h_{22})(\alpha^{3}(l_{222})\alpha^{2}(k_{22}))\\
&=\sum \beta(a)\{[(h_{11}\cdot \beta^{-2}(b))(h_{12}\cdot[(\alpha^{-3}(l_{1})\cdot\beta^{-4}(c))\sigma(\alpha^{-2}(l_{21}),\alpha^{-3}(k_{1}))])]\\
&\sigma(\alpha(h_{21}),\alpha(l_{221})k_{21})\}\#\alpha^{3}(h_{22})(\alpha^{3}(l_{222})\alpha^{2}(k_{22}))\\
&=\sum \beta(a)\{[(h_{11}\cdot \beta^{-2}(b))[(h_{121}\cdot(\alpha^{-3}(l_{1})\cdot\beta^{-4}(c)))(h_{122}\cdot\sigma(\alpha^{-2}(l_{21}),\alpha^{-3}(k_{1})))]]\\
&\sigma(\alpha(h_{21}),\alpha(l_{221})k_{21})\}\#\alpha^{3}(h_{22})(\alpha^{3}(l_{222})\alpha^{2}(k_{22}))\\
&=\sum \beta(a)\{[(h_{11}\cdot \beta^{-2}(b))(\alpha(h_{121})\cdot(\alpha^{-2}(l_{1})\cdot\beta^{-3}(c)))]\\
&[(\alpha(h_{122})\cdot\sigma(\alpha^{-1}(l_{21}),\alpha^{-2}(k_{1})))\sigma(h_{21},l_{221}\alpha^{-1}(k_{21}))]\}\\
&\#\alpha^{3}(h_{22})(\alpha^{3}(l_{222})\alpha^{2}(k_{22}))\\
&=\sum \beta(a)\{[(\alpha^{-1}(h_{1})\cdot \beta^{-2}(b))(h_{21}\cdot(\alpha^{-2}(l_{1})\cdot\beta^{-3}(c)))][(\alpha^{2}(h_{2211})\cdot\sigma(l_{211},\alpha^{-1}(k_{11})))\\
&\sigma(\alpha^{2}(h_{2212}),l_{212}\alpha^{-1}(k_{12}))]\}\#\alpha^{4}(h_{222})(\alpha^{2}(l_{22})\alpha(k_{2}))\\
&=\sum \beta(a)\{[(\alpha^{-1}(h_{1})\cdot \beta^{-2}(b))(h_{21}\cdot(\alpha^{-2}(l_{1})\cdot\beta^{-3}(c)))][\sigma(\alpha^{2}(h_{2211}),\alpha(l_{211}))\\
&\sigma(\alpha(h_{2212})l_{212},\alpha^{-1}(k_{1}))]\}\#\alpha^{4}(h_{222})(\alpha^{2}(l_{22})\alpha(k_{2}))\ (1)\\
&=\sum \beta(a)\{[(\alpha^{-1}(h_{1})\cdot \beta^{-2}(b))[(\alpha^{-1}(h_{21})\cdot(\alpha^{-3}(l_{1})\cdot\beta^{-4}(c)))\sigma(\alpha(h_{2211}),l_{211})]\\
&\sigma(\alpha^{2}(h_{2212})\alpha(l_{212}),k_{1}]\}\#\alpha^{4}(h_{222})(\alpha^{2}(l_{22})\alpha(k_{2}))\\
&=\sum \beta(a)\{[(\alpha^{-1}(h_{1})\cdot \beta^{-2}(b))[(\alpha(h_{2111})\cdot(\alpha^{-1}(l_{111})\cdot\beta^{-4}(c)))\sigma(\alpha(h_{2112}),l_{112})]\\
&\sigma(\alpha(h_{212})l_{12},k_{1}]\}\#\alpha^{3}(h_{22})(\alpha(l_{2})\alpha(k_{2}))\\
&=\sum \beta(a)\{[(\alpha^{-1}(h_{1})\cdot \beta^{-2}(b))[(\alpha(h_{2111})\cdot(\alpha^{-1}(l_{111})\cdot\beta^{-4}(c)))\sigma(\alpha(h_{2112}),l_{112})]\\
&\sigma(\alpha(h_{212})l_{12},k_{1}]\}\#\alpha^{3}(h_{22})(\alpha(l_{2})\alpha(k_{2}))\\
&=\sum \beta(a)\{[(\alpha^{-1}(h_{1})\cdot \beta^{-2}(b))[\sigma(\alpha(h_{2111}),l_{111})(h_{2112}\alpha^{-1}(l_{112})\cdot\beta^{-3}(c))]\\
&\sigma(\alpha(h_{212})l_{12},k_{1}]\}\#\alpha^{3}(h_{22})(\alpha(l_{2})\alpha(k_{2}))\ (2)\\
&=\sum\{a[(\alpha^{-1}(h_{1})\cdot \beta^{-2}(b))(\sigma(\alpha^{2}(h_{2111}),\alpha(l_{111})))][(\alpha^{2}(h_{2112})\alpha(l_{112})\cdot\beta^{-1}(c))\\
&\sigma(\alpha(h_{212})l_{12},k_{1}]\}\#\alpha^{3}(h_{22})(\alpha(l_{2})\alpha(k_{2}))\\
&=\sum\{a[(\alpha^{-1}(h_{1})\cdot \beta^{-2}(b))\sigma(h_{21},\alpha^{-1}(l_{1}))]\}[(\alpha(h_{221})l_{21}\cdot\beta^{-1}(c))\\
&\sigma(\alpha^{2}(h_{2221})\alpha(l_{221}),k_{1})]\#(\alpha^{4}(h_{2222})\alpha^{3}(l_{222}))\alpha^{2}(k_{2})\\
&=\sum \{a[(\alpha^{-1}(h_{1})\cdot\beta^{-2}(b))\sigma(h_{21},\alpha^{-1}(l_{1}))]\#\alpha^{2}(h_{22})\alpha(l_{2})\}(\beta(c)\#\alpha(k))\\
&=[(a\#h)(b\#l)](\beta(c)\#\alpha(k)).
\end{align*}
The proof is completed.
\end{proof}

\begin{example}
When $\sigma$ is trivial, that is $\sigma(h,k)=\varepsilon(h)\varepsilon(k)1_{A}$, then by (3.1), $A$ is a left $H$-module. Thus $A$ is a left $H$-module algebra. The crossed product $A\#_{\sigma}H$ is reduced to the smash product $A\#H$.
\end{example}

\begin{example}
Let $G$ be a group, and $\alpha$ is an automorphism of $G$. Then $G'=(G,\alpha)$ is a Hom-group with the structure
$$g\cdot h=\alpha(gh),$$
where $\cdot$ means the multiplication in $G'.$

Let $kG$ be the usual group algebra spanned by $G$. Clearly $\alpha$ can be extended to an automorphism of $kG$, still denoted by $\alpha$. Then $(kG,\alpha)$ is a monoidal Hom-Hopf algebra with the structure:
$$\Delta(g)=\alpha^{-1}(g)\otimes\alpha^{-1}(g),\ \varepsilon(g)=1_{k},\ S(g)=g^{-1}.$$
We call $(kG,\alpha)$ a Hom-group algebra. For details we can refer to \cite{CG}.

Assume that $N$ is a normal subgroup of $G$, and $\alpha(N)=N.$ Then $N'=(N,\alpha)$ is also a normal subgroup of $G'$. Hence we have the quotient Hom-group $(G'/N,\overline{\alpha})$, where $\overline{\alpha}$ is an automorphism of $G'/N'$ such that $\overline{\alpha}(\bar{g})=\overline{\alpha(g)}$.
For each coset $\bar{x}\in G/N$,choose a coset representation $\gamma(\bar{x})\in\bar{x}$ satisfying $\alpha\circ\gamma=\gamma\circ\overline{\alpha}$. For simplicity assume $\gamma(\bar{1})=1.$

Now define the left action of $k[G'/N']$ on $kN'$ by
$$\bar{x}\triangleright m=[\gamma(\overline{\alpha}^{-1}(\bar{x}))\cdot\alpha^{-1}(m)]\cdot\gamma(\bar{x})^{-1},$$

and $\sigma:k[G'/N']\otimes k[G'/N']\rightarrow kN'$ by
$$\sigma(\bar{x},\bar{y})=[\gamma(\overline{\alpha}^{-1}(\bar{x}))\cdot\gamma(\overline{\alpha}^{-1}(\bar{y}))]\cdot\gamma(\overline{\alpha}^{-1}(\bar{x})\cdot\overline{\alpha}^{-1}(\bar{y}))^{-1}.$$
It is straightforward to verify the relations in Proposition 3.3 hold. Then we have the crossed product $kG'=kN'\#_{\sigma}k[G'/N']$.
\end{example}

\section{Cleft extensions}

In this section, we will prove that a Hom-crossed product is equivalent to a Hom-cleft extension. This result is a generalization of the theory in the usual Hopf algebras. First we need the following definition.
\begin{definition}
Let $(H,\alpha)$ be a monoidal Hom-Hopf algebra and $(B,\beta)$  a Hom-algebra. Assume $B$ is a right $H$-comodule algebra, and $A=B^{coH}$. Then $A\subset B$ is called $H$-cleft extension if there exists  a right $H$-comodule map $\gamma:H\rightarrow B$ which is convolution inverse.
\end{definition}

Note that we may always assume $\gamma(1)=1.$

\begin{lemma}
Assume that $(B,\beta)$ is a right $H$-Hom-comodule algebra, via $\rho:B\rightarrow B\otimes H$, and that $A\subset B$ is a $H$-cleft extension via $\gamma$ with $\gamma(1)=1.$ Then
\begin{enumerate}
\item[(1)]
$\rho\circ\gamma^{-1}=(\gamma^{-1}\otimes S)\circ\tau\circ\Delta$.
\item[(2)]
$\sum b_{(0)}\gamma^{-1}(b_{(1)})\in A$ for any $b\in B.$
\end{enumerate}
\end{lemma}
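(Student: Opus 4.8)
The plan is to treat both parts through the convolution algebra $\Hom(H, B\otimes H)$, exploiting that $\rho$ is a Hom-algebra map and that $\gamma$ is a right $H$-comodule map. Recall the latter means $\rho\circ\gamma = (\gamma\otimes\id)\circ\Delta$, i.e. $\rho(\gamma(h)) = \sum\gamma(h_1)\otimes h_2$, and that $\gamma*\gamma^{-1} = \gamma^{-1}*\gamma = 1_B\varepsilon$ in $\Hom(H,B)$, where $*$ denotes convolution $(f*g)(h)=\sum f(h_1)g(h_2)$.

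For (a): set $\Theta := (\gamma^{-1}\otimes S)\circ\tau\circ\Delta$, so that $\Theta(h) = \sum\gamma^{-1}(h_2)\otimes S(h_1)$; the goal is $\rho\gamma^{-1} = \Theta$. First, since $\rho$ is a Hom-algebra map, $\rho\gamma^{-1}$ is the two-sided convolution inverse of $\rho\gamma$ in $\Hom(H, B\otimes H)$: indeed $(\rho\gamma * \rho\gamma^{-1})(h) = \sum\rho(\gamma(h_1)\gamma^{-1}(h_2)) = \rho(\varepsilon(h)1_B) = \varepsilon(h)(1_B\otimes 1_H)$, and symmetrically on the other side. It therefore suffices to show that $\Theta$ is also a convolution inverse of $\rho\gamma$. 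Computing $\rho\gamma * \Theta$ and inserting the comodule-map formula for $\rho\gamma$ together with the definition of $\Theta$, one multiplies inside $B\otimes H$ and is left with an expression whose $H$-leg is a product of the form $h_{(i)}S(h_{(j)})$; applying the Hom-coassociativity to align the coproduct legs and then the antipode axiom $\sum h_1 S(h_2) = \varepsilon(h)1_H$ collapses this leg to $1_H$ and contracts the remaining legs by the counit, leaving $\varepsilon(h)(1_B\otimes 1_H)$. By uniqueness of convolution inverses (which holds because convolution in $\Hom(H, B\otimes H)$ is associative, a consequence of the Hom-coassociativity of $H$), we conclude $\rho\gamma^{-1} = \Theta$, which is exactly (a).

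For (b): write $x = \sum b_{(0)}\gamma^{-1}(b_{(1)})$ and verify the coinvariance condition directly by computing $\rho(x)$. Using that $\rho$ is an algebra map, $\rho(x) = \sum\rho(b_{(0)})\,\rho(\gamma^{-1}(b_{(1)}))$; substituting part (a) for $\rho(\gamma^{-1}(b_{(1)}))$ produces a sum whose $H$-legs are built from $b_{(0)(1)}$ and $S(b_{(1)1})$. I then use the Hom-comodule coassociativity to rewrite $b_{(0)(0)}\otimes b_{(0)(1)}\otimes b_{(1)}$ in terms of the split $b_{(0)}\otimes b_{(1)1}\otimes b_{(1)2}$, so that the two $H$-legs become adjacent coproduct factors; the antipode axiom then collapses them to $1_H$, and the counit property of the comodule returns the $B$-leg to $\beta^{-1}(x)$. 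This yields $\rho(x) = \beta^{-1}(x)\otimes 1_H$, i.e. $x\in B^{coH}=A$.

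The conceptual skeleton — uniqueness of convolution inverses for (a), antipode-collapse for (b) — is the classical Hopf--Galois argument. The main obstacle is purely bookkeeping in the Hom setting: because Hom-coassociativity and Hom-associativity carry the twisting maps $\alpha^{\pm1}$ (and $\beta^{\pm1}$ once $\rho$ is applied), each iterated coproduct and each application of the antipode must be paired with the correct powers of $\alpha$ and $\beta$, using $S\circ\alpha=\alpha\circ S$ and $\rho\circ\beta=(\beta\otimes\alpha)\circ\rho$ to move them past $S$ and $\rho$. Arranging these twists to cancel so that the final contraction is clean is where the care is required.
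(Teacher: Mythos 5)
Your proposal follows essentially the same route as the paper's own proof: for (a) you exploit that $\rho$ is a Hom-algebra map to identify $\rho\circ\gamma^{-1}$ as the convolution inverse of $\rho\circ\gamma=(\gamma\otimes\mathrm{id})\circ\Delta$, verify that $(\gamma^{-1}\otimes S)\circ\tau\circ\Delta$ is also an inverse via Hom-coassociativity and the antipode axiom, and conclude by uniqueness of inverses; for (b) you compute $\rho\bigl(\sum b_{(0)}\gamma^{-1}(b_{(1)})\bigr)$ by substituting (a) and collapsing with the antipode and counit to obtain $\beta^{-1}(x)\otimes 1_H$, exactly as the paper does. The argument is correct, and the $\alpha$, $\beta$ bookkeeping you flag is indeed the only place where care beyond the classical Hopf--Galois case is needed.
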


\begin{proof}
(1) Since $\rho$ is an algebra map, $\rho\circ\gamma^{-1}$ is the inverse of $\rho\circ\gamma=(\gamma\otimes id)\Delta.$ Let $\lambda=(\gamma^{-1}\otimes S)\circ\tau\circ\Delta$. Then for any $h\in H$,
\begin{align*}
((\rho\circ\gamma)\ast\lambda)(h)&=\sum[(\gamma\otimes id)\Delta(h_{1})][(\gamma^{-1}\otimes S)\circ\tau\circ\Delta(h_{2})]\\
                                 &=\sum[\gamma(h_{11})\otimes h_{12}][\gamma^{-1}(h_{22})\otimes S(h_{21})]\\
                                 &=\sum\gamma(h_{11})\gamma^{-1}(h_{22})\otimes h_{12}S(h_{21})\\
                                 &=\sum\gamma(\alpha^{-1}(h_{1}))\gamma^{-1}(h_{22})\otimes \varepsilon(h_{21})1\\
                                 &=\sum\gamma(\alpha^{-1}(h_{1}))\gamma^{-1}(\alpha^{-1}(h_{2}))\otimes 1\\
                                 &=\varepsilon(h)1\otimes 1.
\end{align*}
Thus $\lambda=\rho\circ\gamma^{-1}$ by the uniqueness of inverses.

(2) For any $b\in B$,
\begin{align*}
\sum\rho(b_{(0)}\gamma^{-1}(b_{(1)}))&=\sum\rho(b_{(0)})\rho(\gamma^{-1}(b_{(1)}))\\
                                     &=\sum(b_{(0)(0)}\otimes b_{(0)(1)})(\gamma^{-1}(b_{(1)2})\otimes S(b_{(1)1}))\ by\ (a)\\
                                     &=\sum b_{(0)(0)}\gamma^{-1}(b_{(1)2})\otimes b_{(0)(1)}S(b_{(1)1})\\
                                     &=\sum \beta^{-1}(b_{(0)})\gamma^{-1}(\alpha(b_{(1)22}))\otimes b_{(1)1}S(\alpha(b_{(1)21}))\\
                                     &=\sum \beta^{-1}(b_{(0)})\gamma^{-1}(b_{(1)2})\otimes \alpha(b_{(1)11})S(\alpha(b_{(1)12}))\\
                                     &=\sum \beta^{-1}(b_{(0)})\gamma^{-1}(\alpha^{-1}(b_{(1)}))\otimes 1\\                                   &=\sum\beta^{-1}(b_{(0)}\gamma^{-1}(b_{(1)})\otimes 1.
\end{align*}
Hence we have (2).
\end{proof}

\begin{proposition}
Let $A\subset B$ be right $H$-cleft extension via $\gamma:H\rightarrow B$. Then there is a crossed product action of $H$ on $A$ given by
$$h\cdot a=\sum(\gamma(h_{1})\beta^{-1}(a))\gamma^{-1}(\alpha(h_{2})),$$
and a convolution inverse map $\sigma:H\otimes H\rightarrow A$ given by
$$\sigma(h,k)=\sum(\gamma(h_{1})\gamma(k_{1}))\gamma^{-1}(h_{2}k_{2}).$$
Then we have the crossed product $A\#_{\sigma}H$. Moreover $\Phi:A\#_{\sigma}H\rightarrow B,\ a\#h\mapsto a\gamma(h)$ is a Hom-algebra map. Moreover $\Phi$ is both a left $A$-module and right $H$-comodule map, where $a\cdot(b\#h)=\beta^{-1}(a)b\#\alpha(h)$ and $\sum(a\#h)_{(0)}\otimes(a\#h)_{(1)}=\sum\beta^{-1}(a)\#h_{1}\otimes \alpha(h_{2})$.
\end{proposition}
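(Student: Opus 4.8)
The plan is to make the map $\Phi(a\#h)=a\gamma(h)$ the central object and deduce every assertion from its multiplicativity, which I in turn reduce to two ``straightening'' identities inside $B$. First, though, I must check that the proposed data are well defined, i.e.\ that
$$h\cdot a=\sum(\gamma(h_1)\beta^{-1}(a))\gamma^{-1}(\alpha(h_2)),\qquad \sigma(h,k)=\sum(\gamma(h_1)\gamma(k_1))\gamma^{-1}(h_2k_2)$$
really take values in $A=B^{coH}$. For this I apply $\rho$, substitute $\rho\circ\gamma=(\gamma\otimes\id)\Delta$ together with $\rho\circ\gamma^{-1}=(\gamma^{-1}\otimes S)\circ\tau\circ\Delta$ from Lemma~4.2(a), and collapse the interior $H$-factors via $\sum h_1S(h_2)=\varepsilon(h)1_H$ and the counit axiom; the outcome is $\rho(h\cdot a)=\beta^{-1}(h\cdot a)\otimes1$ and similarly for $\sigma$, which is exactly the coinvariance defining $A$. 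Convolution invertibility of $\sigma$ follows from that of $\gamma$, since $\sigma$ is a convolution product of maps assembled from the convolution-invertible $\gamma$ and $\gamma^{-1}$.

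The engine of the argument is a pair of straightening identities in $B$, the Hom-analogues of the classical relations
$$\gamma(h)\,a=\sum(h_1\cdot a)\gamma(h_2),\qquad \gamma(h)\gamma(k)=\sum\sigma(h_1,k_1)\gamma(h_2k_2).$$
I would prove each by inserting the definition of the action, resp.\ the cocycle, into the right-hand side, reassociating by Hom-associativity $\beta(x)(yz)=(xy)\beta(z)$ in $B$, and cancelling a convolution factor $\gamma^{-1}\ast\gamma$ or $\gamma\ast\gamma^{-1}$, which equals $\varepsilon(-)1$, through the counit axiom. In the Hom setting both identities acquire controlled insertions of $\alpha^{\pm1}$ and $\beta^{\pm1}$ forced by the twisting constraint and by the comodule-map property $\gamma\circ\alpha=\beta\circ\gamma$; pinning these powers down so that they match the multiplication of Definition~3.2 is, I expect, the \emph{main obstacle}, since every reassociation in $B$ injects a twist that must be tracked exactly.

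Granting the straightening identities, multiplicativity of $\Phi$ is obtained by expanding $\Phi(a\#h)\Phi(b\#k)=(a\gamma(h))(b\gamma(k))$: I use Hom-associativity to bring $\gamma(h)$ next to $b$, apply the first identity to rewrite $\gamma(h)\,b$ as $\sum(h_1\cdot b)\gamma(h_2)$, reassociate, and apply the second identity to merge $\gamma(h_2)\gamma(k)$ into $\sum\sigma(\cdots)\gamma(\cdots)$; the resulting expression agrees term-by-term with $\Phi$ applied to the product $(a\#h)(b\#k)$ of Definition~3.2. Because $\gamma$ is convolution invertible, $\Phi$ is moreover bijective, with inverse assembled from the map $b\mapsto\sum b_{(0)}\gamma^{-1}(b_{(1)})$ of Lemma~4.2(b), and it intertwines the twistings, $\Phi\circ(\beta\otimes\alpha)=\beta\circ\Phi$, since $\gamma\alpha=\beta\gamma$ and $\beta$ is multiplicative. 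Hom-associativity and the unit $1\#1$ of $(A\#_\sigma H,\beta\otimes\alpha)$ can then be transported from $B$ through this bijective, twist-preserving, multiplicative $\Phi$; by the equivalence of Proposition~3.3 this is the same as verifying conditions (a)--(c), but the transport argument avoids the explicit check of (3.1) and (3.2). The morphism condition (c), $\sigma\circ(\alpha\otimes\alpha)=\beta\circ\sigma$, is in any case immediate from $\gamma\alpha=\beta\gamma$ and the multiplicativity of $\beta$.

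It remains to record the module and comodule statements, both short. For the left $A$-action $a\cdot(b\#h)=\beta^{-1}(a)b\#\alpha(h)$ I compute
$$\Phi(a\cdot(b\#h))=(\beta^{-1}(a)b)\gamma(\alpha(h))=(\beta^{-1}(a)b)\beta(\gamma(h))=a(b\gamma(h))=a\,\Phi(b\#h),$$
the third equality being Hom-associativity applied to $\beta(\beta^{-1}(a))(b\gamma(h))$ together with $\gamma\alpha=\beta\gamma$. For the right $H$-coaction $\rho_{A\#H}(a\#h)=\sum\beta^{-1}(a)\#h_1\otimes\alpha(h_2)$ I use that $\rho$ is an algebra map, that $a$ is coinvariant, and that $\rho\circ\gamma=(\gamma\otimes\id)\Delta$, to get
$$\rho(\Phi(a\#h))=\rho(a)\rho(\gamma(h))=(\beta^{-1}(a)\otimes1)\Big(\sum\gamma(h_1)\otimes h_2\Big)=\sum\beta^{-1}(a)\gamma(h_1)\otimes\alpha(h_2),$$
and this is exactly $(\Phi\otimes\id)\rho_{A\#H}(a\#h)$, which completes the verification.
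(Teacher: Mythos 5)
Your overall strategy coincides with the paper's: coinvariance of $h\cdot a$ and $\sigma(h,k)$ is checked by applying $\rho$ and invoking Lemma 4.2(a); the inverse of $\Phi$ is assembled from the map of Lemma 4.2(b); the crossed-product conditions (3.1)--(3.2) are not verified directly but transported through the bijective, unit- and twist-preserving, multiplicative $\Phi$ via Proposition 3.3 (the paper does exactly this, asserting ``the condition (3.1) and (3.2) hold'' only after establishing the isomorphism); and your explicit $A$-linearity and $H$-colinearity computations for $\Phi$ are correct, indeed more detailed than the paper's ``it is easy to check''. The only structural difference is organizational: you factor the multiplicativity of $\Phi$ through two straightening identities in $B$, in the classical Blattner--Montgomery style, where the paper performs one monolithic expansion of $\Phi((a\#h)(b\#k))$ down to $(a\gamma(h))(b\gamma(k))$. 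Your modularization is cleaner and isolates reusable lemmas; the paper's single computation avoids ever having to state those identities in their correct Hom-twisted form.

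That last point, however, is where your proposal stops short of being a proof: you state the straightening identities only in their classical form, acknowledge that in the Hom setting they ``acquire controlled insertions of $\alpha^{\pm1}$ and $\beta^{\pm1}$'', and defer pinning the powers down --- but that bookkeeping is the entire content of this proposition (it is what the paper's long computation actually does), so ``granting the straightening identities'' essentially grants the theorem. The gap is closable by exactly the method you describe, and the correct statements are
$$\gamma(h)\,a=\sum\bigl(h_{1}\cdot\beta^{-1}(a)\bigr)\,\gamma(\alpha(h_{2})),\qquad \gamma(h)\gamma(k)=\sum\sigma(h_{1},k_{1})\,\gamma(h_{2}k_{2}),$$
for $a\in A$: the first identity is genuinely twisted, but the second holds verbatim with no twists, contrary to your expectation that both deform. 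Each is proved by inserting the definitions, writing the rightmost factor as $\beta(\cdot)$ so that Hom-associativity $(xy)\beta(z)=\beta(x)(yz)$ applies, realigning Sweedler indices with Hom-coassociativity $\sum h_{11}\otimes h_{12}\otimes\alpha^{-1}(h_{2})=\sum\alpha^{-1}(h_{1})\otimes h_{21}\otimes h_{22}$, and cancelling via $\sum\gamma^{-1}(m_{1})\gamma(m_{2})=\varepsilon(m)1$; note the cancellation only fires when both arguments carry the same power of $\alpha$, which is what forces the placement of twists above and would derail a careless guess. Two smaller soft spots of the same nature: convolution invertibility of $\sigma$ (needed even to form $A\#_{\sigma}H$) is asserted rather than proved, and its inverse must be shown to take values in $A$, not just in $B$; and the weak-action axioms for $h\cdot a$ are nowhere checked. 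The paper is terse on these points too, but a self-contained argument should include them.
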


\begin{proof}
We first show that $h\cdot a\in A$ for any $a\in A,h\in H$. Indeed
\begin{align*}
\rho(h\cdot a)&=\rho(\sum (\gamma(h_{1})\beta^{-1}(a))\gamma^{-1}(\alpha(h_{2}))\\
              &=\sum[\rho\gamma(h_{1})\rho(\beta^{-1}(a))]\rho\gamma^{-1}(\alpha(h_{2}))\\
              &=\sum[(\gamma(h_{11})\otimes h_{12})(\beta^{-2}(a)\otimes 1)](\beta(\gamma^{-1}(h_{2})_{(0)})\otimes\alpha(\gamma^{-1}(h_{2})_{(1)}))\\
              &=\sum(\gamma(h_{11})\beta^{-2}(a)\otimes \alpha(h_{12}))(\beta(\gamma^{-1}(h_{22}))\otimes\alpha(S(h_{21})))\\
              &=\sum(\gamma(h_{11})\beta^{-2}(a))\beta(\gamma^{-1}(h_{22}))\otimes\alpha(h_{12})\alpha(S(h_{21}))\\
              &=\sum(\gamma(\alpha^{-1}(h_{1}))\beta^{-2}(a))\beta(\gamma^{-1}(h_{22}))\otimes\varepsilon(h_{21})1\\
              &=\sum(\gamma(\alpha^{-1}(h_{1}))\beta^{-2}(a))\gamma^{-1}(h_{2})\otimes1\\
              &=\beta^{-1}(h\cdot a)\otimes1.
\end{align*}

Thus $h\cdot a\in B^{coH}=A.$ It is straightforward to verify that $H$ weakly acts on $A$.

And for any $h,k\in H $, we have
\begin{align*}
\rho(\sigma(h,k))&=\sum (\rho\gamma(h_{1})\rho\gamma(k_{1}))\rho\gamma^{-1}(h_{2}k_{2})\\
                 &=\sum[(\gamma(h_{11})\otimes h_{12}))(\gamma(k_{11})\otimes k_{12})][\gamma^{-1}(h_{22}k_{22})\otimes S(h_{21}k_{21})]\\
                 &=\sum(\gamma(h_{11})\gamma(k_{11}))\gamma^{-1}(h_{22}k_{22})\otimes (h_{12}k_{12})S(h_{21}k_{21})\\
                 &=\sum(\gamma(\alpha^{-1}(h_{1}))\gamma(\alpha^{-1}(k_{1})))\gamma^{-1}(h_{22}k_{22})\otimes \varepsilon(h_{21}k_{21})1\\
                 &=\sum(\gamma(\alpha^{-1}(h_{1}))\gamma(\alpha^{-1}(k_{1})))\gamma^{-1}(\alpha^{-1}(h_{2}k_{2}))\otimes 1\\
                 &=\beta^{-1}(\sigma(h,k))\otimes 1.
\end{align*}

Thus $\sigma(h,k)\in A.$

Define $\Psi:B\rightarrow A\#_{\sigma}H$ by $b\mapsto\sum b_{(0)(0)}\gamma^{-1}(b_{(0)(1)})\#b_{(1)}$. By Lemma 3.2(b), we know that it makes sense. Next we will show that $\Phi$ and $\Psi$ are mutual inverses. First for any $b\in B$,
\begin{align*}
\Phi\Psi(b)&=\sum\Phi(b_{(0)(0)}\gamma^{-1}(b_{(0)(1)})\#b_{(1)})=\sum(b_{(0)(0)}\gamma^{-1}(b_{(0)(1)}))\gamma(b_{(1)})\\
           &=\sum\beta(b_{(0)(0)})(\gamma^{-1}(b_{(0)(1)})\gamma(\alpha^{-1}(b_{(1)})))=\sum b_{(0)}(\gamma^{-1}(b_{(1)1})\gamma(b_{(1)2}))\\
           &=\sum b_{(0)}\varepsilon(b_{(1)})1=b,
\end{align*}

and for any $a\#h\in A\#_{\sigma}H$,
\begin{align*}
\Psi\Phi(a\#h)&=\Psi(a\gamma(h))=\sum (a_{(0)(0)}\gamma(h)_{(0)(0)})\gamma^{-1}(a_{(0)(1)}\gamma(h)_{(0)(1)})\#a_{(1)}\gamma(h)_{(1)}\\
              &=\sum (\beta^{-2}(a)\gamma(h)_{(0)(0)})\gamma^{-1}(\alpha(\gamma(h)_{(0)(1)}))\#\alpha(\gamma(h)_{(1)})\\
              &=\sum (\beta^{-2}(a)\gamma(h_{1})_{(0)})\gamma^{-1}(\alpha(\gamma(h_{1})_{(1)}))\#\alpha(h_{2})\\
              &=\sum (\beta^{-2}(a)\gamma(h_{11}))\gamma^{-1}(\alpha(h_{12}))\#\alpha(h_{2})\\
              &=\sum \beta^{-1}(a)(\gamma(h_{11})\gamma^{-1}(h_{12}))\#\alpha(h_{2})\\
              &=a\#h.
\end{align*}

Thus $\Psi=\Phi^{-1}$. Moreover $\Phi$ is a Hom-algebra map. Indeed, firstly $\Phi\circ(\beta\otimes\alpha)=\beta\circ\Phi$, and
\begin{align*}
&\Phi((a\#h)(b\#k))\\
=&\sum \{a[(\alpha^{-1}(h_{1})\cdot\beta^{-2}(b))\sigma(h_{21},\alpha^{-1}(k_{1}))]\}\gamma(\alpha^{2}(h_{22})\alpha(k_{2}))\\
=&\sum \{a[((\gamma(\alpha^{-1}(h_{11}))\beta^{-3}(b))\gamma^{-1}(h_{12}))((\gamma(h_{211})\gamma(\alpha^{-1}(k_{11})))\gamma^{-1}(h_{212}\alpha^{-1}(k_{12})))]\}\\
&\gamma(\alpha^{2}(h_{22})\alpha(k_{2}))\\
=&\sum \{a[[(\gamma(\alpha^{-1}(h_{11}))\beta^{-3}(b))(\gamma^{-1}(\alpha^{-1}(h_{12}))\gamma(h_{211}))](\gamma(k_{11})(\gamma^{-1}(h_{212}\alpha^{-1}(k_{12}))))]\}\\
&\gamma(\alpha^{2}(h_{22})\alpha(k_{2}))\\
=&\sum \{a[(\gamma(\alpha^{-1}(h_{1}))\beta^{-2}(b))(\gamma(k_{11})(\gamma^{-1}(\alpha^{-1}(h_{21})\alpha^{-1}(k_{12}))))]\}
\gamma(\alpha^{2}(h_{22})\alpha(k_{2}))\\
=&\sum(a(\gamma(h_{1})\beta^{-1}(b)))[(\gamma(\alpha(k_{11}))\gamma^{-1}(h_{21}k_{12}))\gamma(\alpha(h_{22})k_{2})]\\
=&\sum(a(\gamma(h_{1})\beta^{-1}(b)))\gamma(\alpha(k))\varepsilon(h_{2})\\
=&\Phi(a\#h)\Phi(b\#k).
\end{align*}

Thus $A\#_{\sigma}H\cong B$ as Hom-algebra. The conditions (3.1) and (3.2) hold. Finally it is easy to check that $\Phi$ is a left $A$-module and right $H$-comodule map. The proof is completed.
\end{proof}

\begin{proposition}
Let $A\#_{\sigma}H$ be a crossed product, and define the map $\gamma:H\rightarrow A\#_{\sigma}H$ by $\gamma(h)=1\#\alpha^{-1}(h)$. Then $\gamma$ is convolution invertible with inverse
$$\gamma^{-1}(h)=\sum \sigma^{-1}(S(h_{21}),h_{22})\#S(h_{1}).$$
\end{proposition}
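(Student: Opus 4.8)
The plan is to verify directly that the stated $\gamma^{-1}$ is a two-sided inverse of $\gamma$ for the convolution product $(f\ast g)(h)=\sum f(h_{1})g(h_{2})$ on $\Hom(H,A\#_{\sigma}H)$, whose unit is $h\mapsto\varepsilon(h)(1\#1)$; that is, I must establish $\sum\gamma(h_{1})\gamma^{-1}(h_{2})=\varepsilon(h)(1\#1)=\sum\gamma^{-1}(h_{1})\gamma(h_{2})$. First note that $\gamma(1)=1\#\alpha^{-1}(1)=1\#1$, consistent with the normalization $\gamma(1)=1$. The tools available are the multiplication rule in $A\#_{\sigma}H$, the relations of Proposition 3.3 (the twisted-module identity $1\cdot a=\beta(a)$, the cocycle conditions (3.1) and (3.2), the normalizations $\sigma(1,h)=\sigma(h,1)=\varepsilon(h)1$, and $\sigma\circ(\alpha\otimes\alpha)=\beta\circ\sigma$), the antipode axioms $\sum S(h_{1})h_{2}=\varepsilon(h)1=\sum h_{1}S(h_{2})$, and the convolution-invertibility of $\sigma$, namely $\sum\sigma(h_{1},k_{1})\sigma^{-1}(h_{2},k_{2})=\varepsilon(h)\varepsilon(k)1=\sum\sigma^{-1}(h_{1},k_{1})\sigma(h_{2},k_{2})$.

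For the right inverse I would iterate $\Delta$ on $h$ so that $\gamma^{-1}(h_{2})=\sum\sigma^{-1}(S(h_{221}),h_{222})\#S(h_{21})$, feed $\gamma(h_{1})=1\#\alpha^{-1}(h_{1})$ and this expression into the product, and use the fact that the leading $A$-factor is $1_{A}$ together with $1\cdot x=\beta(x)$ from Proposition 3.3(a) to write, for $g=h_{1}$, $a=\sigma^{-1}(S(h_{221}),h_{222})$ and $k=S(h_{21})$,
$$(1\#g)(a\#k)=\sum\beta\bigl[(\alpha^{-1}(g_{1})\cdot\beta^{-2}(a))\,\sigma(g_{21},\alpha^{-1}(k_{1}))\bigr]\#\alpha^{2}(g_{22})\alpha(k_{2}).$$
The $H$-component then carries products of the form $\alpha^{2}(g_{22})\,\alpha(S(\cdots))$, with $g$ and $k=S(h_{21})$ drawn from adjacent slots of the coproduct, so that the antipode axiom collapses the $H$-slot to $\varepsilon(h)1$ up to $\alpha$-powers; simultaneously the $A$-component acquires a factor $\sum(g_{1}\cdot\sigma^{-1}(\cdots))\,\sigma(\cdots)$ which I rewrite using (3.2) in its $\sigma^{-1}$-form so that the $\sigma$ and $\sigma^{-1}$ terms telescope by convolution-invertibility, leaving $\varepsilon(h)1_{A}$. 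Rebalancing the $\alpha$- and $\beta$-twists then yields $\varepsilon(h)(1\#1)$.

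The left inverse $\sum\gamma^{-1}(h_{1})\gamma(h_{2})$ is treated the same way: now $\gamma^{-1}(h_{1})$ contributes the nontrivial $A$-factor, the multiplication rule again produces a $(\cdot\,\sigma^{-1})\,\sigma$ combination governed by (3.2), and the antipode collapses the $H$-slot. Alternatively, once $\gamma^{-1}$ is confirmed as a right inverse and one exhibits any left inverse of $\gamma$, the uniqueness of convolution inverses in the monoid $\Hom(H,A\#_{\sigma}H)$ forces the two to coincide; this is exactly the device already used in the proof of Lemma 3.2(a), so I would invoke it to avoid repeating a near-identical computation.

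The main obstacle is the bookkeeping: correctly iterating the Hom-coassociativity constraint $\sum\alpha^{-1}(h_{1})\otimes\Delta(h_{2})=\sum\Delta(h_{1})\otimes\alpha^{-1}(h_{2})$ so that the Sweedler indices align after the antipode is applied (recall $\Delta\circ S=(S\otimes S)\circ\tau\circ\Delta$, which reverses slots), and tracking the powers of $\alpha$ and $\beta$ introduced both by the twisted multiplication and by $\sigma\circ(\alpha\otimes\alpha)=\beta\circ\sigma$. The genuinely delicate point is pinpointing the moment to apply the cocycle condition (3.2) in its $\sigma^{-1}$-rewritten form, since it is this step that converts the $\sigma^{-1}\cdots\sigma$ product into a cancelling convolution pair; matching its $\alpha$-twisting against the antipode-induced collapse of the $H$-component is where the computation is most error-prone.
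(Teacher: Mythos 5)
Your proposal is correct and takes essentially the same approach as the paper: direct verification of the convolution identities for $\gamma$ and $\mu(h)=\sum\sigma^{-1}(S(h_{21}),h_{22})\#S(h_{1})$, using the crossed-product multiplication, the antipode axiom, Hom-coassociativity, and the invertibility of $\sigma$, with uniqueness of convolution inverses as a backstop. The only (minor) discrepancy is in which direction needs which tool: the paper explicitly computes only $(\mu\ast\gamma)(h)=\varepsilon(h)1\#1$, where condition (3.2) is never used — the weak-action term hits $1_{A}$ and collapses via $h\cdot 1=\varepsilon(h)1$, so only $\sigma^{-1}\ast\sigma=\varepsilon\varepsilon 1$ enters — and defers the harder direction $(\gamma\ast\mu)$, the one you rightly identify as requiring the cocycle condition (3.2) in its $\sigma^{-1}$-form, to the citation of Proposition 7.2.7 in Montgomery's book.
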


\begin{proof}
Set $\mu(h)=\sum \sigma^{-1}(S(h_{21}),h_{22})\#S(h_{1})$. Then
\begin{align*}
(\mu\ast\gamma)(h)&=\sum(\sigma^{-1}(S(h_{121}),h_{122})\#S(h_{11}))(1\#\alpha^{-1}(h_{2}))\\
                  &=\sum\sigma^{-1}(S(h_{121}),h_{122})\sigma(S(h_{112}),\alpha^{-1}(h_{21}))\#S(\alpha(h_{111}))h_{22}\\
                  &=\sum\sigma^{-1}(S(h_{212}),\alpha(h_{2211}))\sigma(S(h_{211}),\alpha(h_{2212}))\#S(\alpha^{-1}(h_{1}))\alpha(h_{222})\\
                  &=\sum 1\#\varepsilon(h_{21})\varepsilon(h_{221})S(\alpha^{-1}(h_{1}))\alpha(h_{222})\\
                  &=\varepsilon(h)1\#1.
\end{align*}

Thus $\mu$ is the left inverse of $\gamma$. And it is straightforward to verify that $\mu$ is the right inverse of $\gamma$, as done in Proposition 7.2.7 in \cite{Mon}.

Obviously $\gamma$ is  a right $H$-comodule map. This completes the proof.
\end{proof}

By the above two propositions, we obtain the following theorem directly.

\begin{theorem}
$A\subset B$ is a $H$-cleft extension if and only if $B\cong A\#_{\sigma}H.$
\end{theorem}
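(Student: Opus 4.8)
The plan is to read off both implications directly from the two preceding propositions, so that no new computation involving $\alpha,\beta,\sigma$ is required; the only real work is to pin down the sense of $\backsimeq$ and to check that the maps already produced respect \emph{all} the relevant structure at once, namely the right $H$-comodule algebra structure together with the coinvariant subalgebra $A$.

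For the forward implication I would assume $A\subset B$ is an $H$-cleft extension via $\gamma$ with $\gamma(1)=1$, and simply invoke Proposition 4.3. That proposition equips $A$ with a weak action and a convolution-invertible cocycle $\sigma$ satisfying the conditions of Proposition 3.3, so that the crossed product $A\#_{\sigma}H$ is defined, and it exhibits $\Phi\colon A\#_{\sigma}H\to B$, $a\#h\mapsto a\gamma(h)$, as a Hom-algebra isomorphism with explicit inverse $\Psi$. Since the same proposition also records that $\Phi$ is a right $H$-comodule map (and a left $A$-module map), $\Phi$ is in fact an isomorphism of right $H$-comodule algebras, whence $B\backsimeq A\#_{\sigma}H$.

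For the converse I would assume $B\backsimeq A\#_{\sigma}H$ as right $H$-comodule algebras, say via an isomorphism $\theta\colon A\#_{\sigma}H\to B$ identifying the coinvariants $(A\#_{\sigma}H)^{coH}=A\#1$ with $A=B^{coH}$. By Proposition 4.4 the map $\gamma_0\colon H\to A\#_{\sigma}H$, $\gamma_0(h)=1\#\alpha^{-1}(h)$, is a convolution-invertible right $H$-comodule map with the inverse displayed there, so $A\subset A\#_{\sigma}H$ is itself an $H$-cleft extension. I would then set $\gamma=\theta\circ\gamma_0\colon H\to B$ and check that it inherits both properties: convolution-invertibility survives because $\theta$ is a Hom-algebra map, so $\theta\circ\gamma_0$ has convolution inverse $\theta\circ\gamma_0^{-1}$, and the comodule-map property survives because $\theta$ intertwines the two coactions. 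Normalizing by $\gamma(1)=\theta(1\#1)=1$ then exhibits $A\subset B$ as an $H$-cleft extension.

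The one point I would treat most carefully — and the only genuine obstacle — is the precise meaning of $\backsimeq$ in the statement: it must be read as an isomorphism of right $H$-comodule algebras that carries coinvariants to coinvariants, not merely as a Hom-algebra isomorphism. This is exactly the strengthened conclusion Proposition 4.3 supplies via $\Phi$ in the forward direction, and it is what one must take as the hypothesis in the converse; granting it, both directions follow at once and no further relation among the structure maps needs verification.
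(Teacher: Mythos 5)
Your proof is correct and follows essentially the same route as the paper, which deduces the theorem ``directly'' from Proposition 4.3 (the forward direction via the isomorphism $\Phi\colon A\#_{\sigma}H\to B$, $a\#h\mapsto a\gamma(h)$) and Proposition 4.4 (the converse via the convolution-invertible comodule map $\gamma(h)=1\#\alpha^{-1}(h)$). Your additional care in transporting the cleft structure along the isomorphism $\theta$ and in reading $\backsimeq$ as an isomorphism of right $H$-comodule algebras preserving coinvariants merely makes explicit what the paper leaves implicit.
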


\section{Galois extension}

In this section, we will introduce the Galois extensions in the Hom situation, and generalize the result in the Hopf algebra setting that a cleft extension is equivalent to a Galois extension with normal bases property.

Let $(A,\beta)$ be a Hom-algebra, and $(M,\mu)$, $(N,\lambda)$ be a right and left Hom-monoidal module respectively. Define a subspace $X$ of $M\otimes N$ by $X=\{m\cdot a\otimes n-\mu(m)\otimes a\cdot \lambda^{-1}(n)|\forall m\in M,n\in N,a\in A\}$. Set $M\otimes_{A}N$ be the quotient $(M\otimes N)/X$. That is, in $M\otimes_{A}N$, $m\cdot a\otimes n=\mu(m)\otimes a\cdot \lambda^{-1}(n)$.

\begin{definition}
Let $(H,\alpha)$ be a monoidal Hom-Hopf algebra and $(A,\beta)$ be a Hom-algebra. Assume $A$ is a right $H$-comodule algebra. Then the extension $A^{coH}$ is called a right $H$-Galois if the map $\varphi:A\otimes_{A^{coH}}A\rightarrow A\otimes H$ given by $\varphi(a\otimes b)=\beta^{-1}(a)b_{(0)}\otimes \alpha(b_{(1)})$ is a bijective in the category $\mathcal{M}_{k}$.
\end{definition}

\begin{definition}
Let $A\subset B$ be a right $H$-extension. The extension has the normal base property if $B\cong A\otimes H$ as left $A$-module and right $H$-comodule, where the left $A$-module action and right $H$-comodule coaction on $A\otimes H$ is defined in Proposition 4.3.
\end{definition}

\begin{theorem}
Let $A\subset B$ be a right $H$-extension. Then the following are equivalent
\begin{enumerate}
\item[(1)]
$A\subset B$ is $H$-cleft,
\item[(2)]
$A\subset B$ is $H$-Galois and has the normal base property.
\end{enumerate}
\end{theorem}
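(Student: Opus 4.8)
The plan is to prove the two implications separately, using the equivalence $B \backsimeq A\#_\sigma H$ from Theorem~4.5 as the bridge in both directions, so that the bulk of the verification is reduced to the already-established structure of the crossed product together with the explicit comodule map $\gamma$ of Proposition~4.3.

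For the implication (a) $\Rightarrow$ (b), I would start from an $H$-cleft extension $A \subset B$ with comodule map $\gamma$ and its convolution inverse $\gamma^{-1}$. By Theorem~4.5 we have a Hom-algebra isomorphism $\Phi: A\#_\sigma H \to B$, $a\#h \mapsto a\gamma(h)$, which Proposition~4.3 tells us is simultaneously a left $A$-module and right $H$-comodule map for the structures $a\cdot(b\#h)=\beta^{-1}(a)b\#\alpha(h)$ and $\rho(a\#h)=\sum \beta^{-1}(a)\#h_1 \otimes \alpha(h_2)$. Transporting the model $A\otimes H$ along $\Phi$ immediately yields $B \backsimeq A\otimes H$ as left $A$-module and right $H$-comodule, which is exactly the normal base property of Definition~5.2. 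For the Galois property I would exhibit an explicit inverse to $\varphi: A\otimes_{A^{coH}} A \to A\otimes H$. The natural candidate uses $\gamma$: define $\psi: A\otimes H \to A \otimes_{A^{coH}} A$ by $\psi(a\otimes h) = \sum \beta(a)\gamma^{-1}(\alpha^{-1}(h_1)) \otimes \gamma(h_2)$ (up to twisting by $\beta$ and $\alpha$, which I would pin down by matching the Hom-conventions), and then check $\varphi\circ\psi$ and $\psi\circ\varphi$ are the respective identities. The computations $\varphi\psi = \id$ and $\psi\varphi = \id$ are the Hom-analogues of the classical argument and rely on the comodule-map identity $\rho\circ\gamma = (\gamma\otimes\id)\Delta$, on $\rho\circ\gamma^{-1} = (\gamma^{-1}\otimes S)\circ\tau\circ\Delta$ from Lemma~4.2(a), and on the antipode axioms; the balanced tensor relation $m\cdot a\otimes n = \mu(m)\otimes a\cdot\lambda^{-1}(n)$ is what lets the $\gamma\gamma^{-1}$ collapse happen across the $\otimes_{A^{coH}}$.

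For the implication (b) $\Rightarrow$ (a), I would assume $A\subset B$ is $H$-Galois with the normal base property and construct the cleft map $\gamma$. The normal base property gives a left $A$-module, right $H$-comodule isomorphism $\theta: A\otimes H \to B$; I would define $\gamma: H \to B$ by $\gamma(h) = \theta(1\otimes h)$ (again adjusting by $\alpha$ to respect the twist), so that $\gamma$ is automatically a right $H$-comodule map because $\theta$ is. The remaining and genuinely substantive task is to produce a convolution inverse for $\gamma$, and this is where the Galois hypothesis does its work: bijectivity of $\varphi$ lets one solve for a two-sided convolution inverse of $\gamma$ exactly as in the classical Hopf case. Concretely, I would use $\varphi^{-1}$ to define $\gamma^{-1}(h)$ as the appropriate component of $\varphi^{-1}(1\otimes h)$ and verify $\gamma\ast\gamma^{-1} = \gamma^{-1}\ast\gamma = \varepsilon(\cdot)1$.

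The main obstacle I anticipate is bookkeeping the twisting automorphisms $\alpha$ and $\beta$ correctly throughout — in particular getting the powers of $\alpha$ and $\beta$ in the definitions of $\varphi^{-1}$ and $\gamma$ to match the Hom-associativity and Hom-comodule constraints, since the balanced tensor product $\otimes_{A^{coH}}$ itself carries a $\mu,\lambda^{-1}$ twist in its defining relation. The conceptual skeleton is the classical equivalence of cleft, Galois-with-normal-basis, and crossed-product extensions, but each application of coassociativity, the antipode identity, or the comodule axiom must be interleaved with an application of the relevant Hom-identity, so the correctness of the proof hinges on consistent twist tracking rather than on any new idea. I would therefore organize the verification so that Lemma~4.2(a) and the comodule-algebra axioms are invoked as black boxes, keeping the twist-tracking localized to the two inversion computations.
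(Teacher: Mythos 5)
Your overall route coincides with the paper's: the same two implications, with Proposition 4.3/Theorem 4.5 supplying the normal base property, an explicit $\psi$ built from $\gamma$ and $\gamma^{-1}$ inverting the Galois map, and, for the converse, $\gamma(h)=\theta(1\otimes\alpha^{-1}(h))$. The (a)$\Rightarrow$(b) half of your sketch is essentially the paper's argument: your candidate $\psi$ differs from the paper's $\psi(b\otimes h)=\sum\beta^{-1}(b)\gamma^{-1}(h_{1})\otimes\gamma(\alpha(h_{2}))$ only in the placement of twists (which you flag as needing adjustment), and you correctly identify that what lets factors cross the balanced tensor product is Lemma 4.2, i.e.\ that $\sum b_{(0)}\gamma^{-1}(b_{(1)})$ lands in $A$.

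There is, however, a genuine gap in your (b)$\Rightarrow$(a) direction: defining $\gamma^{-1}(h)$ as ``the appropriate component of $\varphi^{-1}(1\otimes h)$'' is not a workable recipe. The element $\varphi^{-1}(1\otimes h)$ lives in the quotient $B\otimes_{A}B$, so its individual tensor legs are not well defined; and the one canonical collapse that does descend to the quotient, namely multiplication, is useless here, since applying $\id\otimes\varepsilon$ to $\varphi\bigl(\varphi^{-1}(1\otimes\alpha^{-1}(h))\bigr)=1\otimes\alpha^{-1}(h)$ shows that multiplying the two legs just returns $\varepsilon(h)1$. The missing ingredient---and the place where the normal base property, not merely bijectivity of $\varphi$, enters the \emph{construction} of the inverse---is the auxiliary left $A$-linear map $g=\beta\circ(\id\otimes\varepsilon)\circ\theta^{-1}\in\Hom_{A}(B,A)$. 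The paper sets $\mu(h)=m(\id\otimes g)\varphi^{-1}(1\otimes\alpha^{-1}(h))$: this is well defined on the balanced tensor product precisely because $g$ is $A$-linear, and the verification of $\mu\ast\gamma=\varepsilon(\cdot)1$ hinges on the reconstruction identity $b=\sum g(b_{(0)})\gamma(b_{(1)})$, which ties $\gamma$ and $g$ together because both are read off from $\theta$ and its inverse (together with the fact that $\theta$ is a comodule map). Your framing that ``bijectivity of $\varphi$ lets one solve for a two-sided convolution inverse exactly as in the classical case'' mis-attributes the mechanism: Galois alone does not imply cleft, so $\theta^{-1}$ must appear in the formula for $\gamma^{-1}$ itself, not only in the definition of $\gamma$. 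Without introducing $g$ (or an equivalent device), neither the well-definedness of your candidate nor the computation $\gamma^{-1}\ast\gamma=\varepsilon(\cdot)1$ can be carried out.
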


\begin{proof}
$(1)\Rightarrow (2)$. By Proposition 2.3, if $A\subset B$ is $H$-cleft, $B\backsimeq A\#_{\sigma} H$ as left $A$-module and right $H$-comodule. Thus the normal base property is satisfied.

Assume $\varphi:B\otimes_{A}B\rightarrow B\otimes H$ is given by $\varphi(a\otimes b)=\beta^{-1}(a)b_{(0)}\otimes \alpha(b_{(1)})$.  Let $\gamma:H\rightarrow B$ be the map such that $A\subset B$ is $H$-cleft. Define $\psi:B\otimes H \rightarrow B\otimes_{A}B$ by
$$\psi(b\otimes h)=\sum \beta^{-1}(b)\gamma^{-1}(h_{1})\otimes \gamma(\alpha(h_{2})).$$
Then on one hand,
\begin{align*}
\varphi\psi(b\otimes h)&=\varphi(\sum\beta^{-1}(b)\gamma^{-1}(h_{1})\otimes \gamma(\alpha(h_{2})))\\
                       &=\sum (\beta^{-2}(b)\gamma^{-1}(\alpha^{-1}(h_{1})))\gamma(\alpha(h_{2}))_{(0)}\otimes\alpha(\gamma(\alpha(h_{2}))_{(1)})\\
                       &=\sum (\beta^{-2}(b)\gamma^{-1}(\alpha^{-1}(h_{1})))\gamma(\alpha(h_{21}))\otimes\alpha^{2}(h_{22})\\
                       &=\sum \beta^{-1}(b)(\gamma^{-1}(h_{11})\gamma(h_{21}))\otimes\alpha(h_{2})\\
                       &=b\otimes h.
\end{align*}
On the other hand,
\begin{align*}
\psi\varphi(a\otimes b)&=\psi(\sum \beta^{-1}(a)b_{(0)}\otimes\alpha(b_{(1)}))\\
                       &=\sum(\beta^{-2}(a)\beta^{-1}(b_{(0)}))\gamma^{-1}(\alpha(b_{(1)1}))\otimes\gamma(\alpha^{2}(b_{(1)2}))\\
                       &=\sum\beta^{-1}(a)(\beta^{-1}(b_{(0)})\gamma^{-1}(b_{(1)1}))\otimes\gamma(\alpha^{2}(b_{(1)2}))\\
                       &=\sum\beta^{-1}(a)(b_{(0)(0)}\gamma^{-1}(b_{(0)(1)}))\otimes\gamma(\alpha(b_{(1)}))\\
                       &\ \ \ \ Since\ \Sigma b_{(0)}\gamma^{-1}(b_{(1)})\in A\\
                       &=\sum a\otimes(b_{(0)(0)}\gamma^{-1}(b_{(0)(1)}))\gamma(b_{(1)})\\
                       &=a\otimes b.
\end{align*}
Now $\varphi$ is a bijective. Thus $A\subset B$ is $H$-Galois.

$(2)\Rightarrow(1)$. Assume $A\subset B$ is $H$-Galois with $\varphi$ being bijective and there exists a bijective $\theta:A\otimes H\rightarrow B$ which is a left $A$-module and right $H$-comodule map. Define $\gamma:H\rightarrow B$ by $\gamma(h)=\theta(1\otimes\alpha^{-1}(h))$. Then it is easy to see that $\gamma$ is a right $H$-comodule map.

Define $g\in Hom_{A}(B,A)$ by $g=\beta\circ(id\otimes\varepsilon)\circ\theta^{-1}$. Obviously $g(\gamma(h))=\varepsilon(h)1$. Then we define $\mu:H\rightarrow B$ by
$$\mu(h)=m(id\otimes g)\varphi^{-1}(1\otimes\alpha^{-1}(h)).$$

We claim that $\mu=\gamma^{-1}$. Indeed firstly for any $h\in H$,
\begin{align*}
(\gamma\ast\mu)(h)&=\sum\gamma(h_{1})m(id\otimes g)\varphi^{-1}(1\otimes\alpha^{-1}(h_{2}))\\
                  &=\sum m(id\otimes g)[(\gamma(\alpha^{-1}(h_{1}))\otimes 1)\varphi^{-1}(\gamma(h_{1})\otimes\alpha^{-1}(h_{2}))]\\
                  &=\sum m(id\otimes g)\varphi^{-1}(\gamma(h_{1})\otimes h_{2})\\
                  &\ \ \ \ \ since\ \varphi^{-1}(\beta(b)\otimes\alpha(h))=(b\otimes1)\varphi^{-1}(1\otimes h)\\
                  &=m(id\otimes g)(1\otimes\gamma\alpha^{-1}(h))\\
                  &=\varepsilon(h)1.
\end{align*}

Secondly, note that
$$(\varphi\otimes id)(\beta^{-1}\otimes id\otimes\alpha)(id\otimes\rho)=(\beta^{-1}\otimes id\otimes\alpha)(id\otimes\Delta)\varphi,$$

hence we have
$$(\beta^{-1}\otimes id\otimes\alpha)(id\otimes\rho)\varphi^{-1}=(\varphi^{-1}\otimes id)(\beta^{-1}\otimes id\otimes\alpha)(id\otimes\Delta).$$

Then for any $h\in H,$ denote $\varphi^{-1}(1\otimes \alpha^{-1}(h))=\sum a_{i}\otimes b_{i}$. We have
$$\sum\beta^{-1}(a_{i})\otimes b_{i(0)}\otimes b_{i(1)}=\sum\varphi^{-1}(1\otimes \alpha^{-1}(h_{1}))\otimes \alpha^{-1}(h_{2}).$$

Also since $\theta$ is a right $H$-comodule map,

\begin{align*}
(\beta^{-1}\otimes id)\theta^{-1}(b)&=(id\otimes\varepsilon\otimes id)(\sum\theta^{-1}(b)_{(0)}\otimes\theta^{-1}(b)_{(1)})\\
                                    &=(id\otimes\varepsilon\otimes id)(\sum\theta^{-1}(b_{(0)})\otimes b_{(1)})\\
                                    &=\sum g(\beta^{-1}(b_{(0)}))\otimes b_{(1)}.
\end{align*}

By $\gamma(h)=\theta(1\otimes\alpha^{-1}(h))$, we have $b=\sum g(b_{(0)})\gamma(b_{(1)})$. Then
\begin{align*}
(\mu\ast\gamma)(h)&=\sum[m(id\otimes g)\varphi^{-1}(1\otimes\alpha^{-1}(h_{1}))]\gamma(h_{2})\\
                  &=\sum[m(id\otimes g)(\beta^{-1}(a_{i})\otimes b_{i(0)})]\gamma(\alpha(b_{i(1)}))\\
                  &=\sum a_{i}(g(b_{i(0)})\gamma(b_{i(1)}))\\
                  &=\sum a_{i}b_{i}=\varepsilon(h)1.
\end{align*}

Thus $\gamma$ is convolution inverse.
The proof is completed.
\end{proof}

By Theorem 2.5, we have the following result.

\begin{corollary}
Let $A\subset B$ be a right $H$-extension. Then $A\subset B$ is Galois with the normal bases property if and only if $B\cong A\#_{\sigma} H$.
\end{corollary}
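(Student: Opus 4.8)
The plan is to obtain this corollary with no new construction, simply by composing the two equivalences already in hand. The preceding theorem, Theorem~5.3, identifies the property ``$H$-cleft'' with the property ``$H$-Galois and normal base,'' while the final theorem of Section~4 identifies ``$H$-cleft'' with ``$B\backsimeq A\#_{\sigma}H$.'' Cleftness therefore serves as a common pivot, and the corollary is the transitive closure of these two biconditionals:
\[
\text{$H$-Galois and normal base}\iff\text{$H$-cleft}\iff B\backsimeq A\#_{\sigma}H.
\]

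Concretely, I would argue both directions through the cleft hypothesis. For the forward implication, assume $A\subset B$ is $H$-Galois and has the normal base property. The implication $(b)\Rightarrow(a)$ of Theorem~5.3 then produces a convolution-invertible right $H$-comodule map $\gamma\colon H\to B$, exhibiting $A\subset B$ as $H$-cleft; feeding this cleaving map into the Section~4 theorem yields the Hom-algebra isomorphism $B\backsimeq A\#_{\sigma}H$, with $\sigma$ the cocycle built from $\gamma$ in Proposition~4.3. For the converse, assume $B\backsimeq A\#_{\sigma}H$. The Section~4 theorem (via the map $\gamma(h)=1\#\alpha^{-1}(h)$ of Proposition~4.4, which is convolution invertible and a right $H$-comodule map) shows that $A\subset B$ is $H$-cleft, and then $(a)\Rightarrow(b)$ of Theorem~5.3 delivers both the Galois and the normal base properties at once.

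I do not expect any genuine obstacle here; the only point deserving care is bookkeeping rather than new mathematics. One must verify that the isomorphism furnished by the Section~4 theorem is an isomorphism of \emph{right $H$-comodule algebras}, not merely of Hom-algebras, so that the comodule-algebra structure, the Galois map $\varphi$, and the normal base property all transport faithfully across it. This is precisely what Proposition~4.3 records about $\Phi$ (namely that $\Phi$ is simultaneously a Hom-algebra map, a left $A$-module map, and a right $H$-comodule map), so the transfer is legitimate and the chaining goes through without gaps. The corollary follows.
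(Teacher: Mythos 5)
Your proposal is correct and matches the paper's own argument: the paper derives this corollary immediately by combining Theorem 5.3 (cleft $\iff$ Galois with normal basis) with the final theorem of Section 4 (cleft $\iff$ $B\backsimeq A\#_{\sigma}H$), using cleftness as the pivot exactly as you do. Your added remark about the isomorphism respecting the left $A$-module and right $H$-comodule structures (via Proposition 4.3) is a reasonable bookkeeping point that the paper leaves implicit.
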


\section*{Acknowledgements}

This work was supported by the NSF of China (No. 11901240) and the NSF of Shandong Province (No. ZR2018PA006).


\begin{thebibliography}{10}



\bibitem{BCM} R. Blattner, M. Cohen, S. Montgomery. Crossed products and inner actions
of Hopf algebras. Trans. Am. Math. Soc., 298(1986): 671--711.

\bibitem{BM} R. Blattner, S. Montgomery. Crossed products and Galois extensions of Hopf
algebras. Pac. J. Math., 137(1989): 37--54.

\bibitem{CG} S. Caenepeel, I. Goyvaerts. Monoidal Hom-Hopf algebras. Commu. Alg., 39(2011): 2216--2240.

\bibitem{Chen} Y. Y. Chen, Y. Wang, L. Y. Zhang. The construction of Hom-Lie bialgebras. J. Lie Theory 22(2010), 1075--1089.


\bibitem{DT} Y. Doi, M. Takeuchi. Cleft comodule algebras for a bialgebra. Commu.
Alg., 14(1986): 801--817.

\bibitem{HLS} J. T. Hartwig, D. Larsson, S. D. Silvestrov. Deformations of Lie algebras
using $\sigma$-derivations. J. Algebra,  295(2006): 314--361.

\bibitem{K} C. Kassel.  Quantum Groups. Graduate Texts in Math. Vol. 155, 1995.

\bibitem{Lar05} D. Larsson, S. D. Silvestrov. Quasi-Hom-Lie algebras, central extensions and 2-cocycle-like identities. J. Algebra, 288(2005): 321--344.

\bibitem{Lar07} D. Larsson, S. D. Silvestrov. Quasi-deformations of$sl_2(f)$ using twisted derivations, Commun. Algebra, 35(2007): 4303--4318.


\bibitem{LS}
L. Liu, B. Shen. Radford's biproducts and Yetter-Drinfeld modules for monoidal Hom-Hopf algebras. J. Math. Phys., 55(2014), 031701.

\bibitem{Mon} S. Montgomery. Hopf algebras and their actions on rings. CBNS series in Math., Vol. 82, Am. Math. Soc., Provience, 1993.


\bibitem{MS1} A. Makhlouf, S. D. Silvestrov. Hom-algebra structure. J. Gen. Lie Theory Appl., 2(2008): 52--64.

\bibitem{MS2} A. Makhlouf, S. D. Silvestrov. Hom-algebras and Hom-coalgebras. J. Alg. Appl., 9(2010): 553--589.

\bibitem{Yau} D. Yau. Hom-quantum group I: Quasi-triangular Hom-bialgebras. J. Phys. A: Math. Theor. 45(2012), 065203.
\end{thebibliography}
\end{document}